\newcommand{\powser}[1]{[\![#1]\!]}

\newcommand{\G}{\mathbb{G}}

\newcommand{\Q}{\mathbb{Q}}

\newcommand{\Z}{\mathbb{Z}}

\newcommand{\al}{\alpha}

\newcommand{\lra}[1]{\overset{#1}{\longrightarrow}}

\date{\today}
\documentclass[10pt]{amsart}
\usepackage{amssymb,amsfonts,amsthm,amsmath,verbatim,lscape,color,tensor,url}
\usepackage[margin=1.5in]{geometry}
\usepackage[enableskew]{youngtab}
\usepackage[all]{xy}
\theoremstyle{definition}
\DeclareMathOperator{\Aut}{Aut}

\DeclareMathOperator{\Spf}{Spf}

\DeclareMathOperator{\Sub}{Sub}

\DeclareMathOperator{\Level}{Level}

\DeclareMathOperator{\Tr}{Tr}

\newtheorem{prop}[subsection]{Proposition}
\newtheorem{cor}[subsection]{Corollary}
\newtheorem{lemma}[subsection]{Lemma}

\newtheorem{example}[subsection]{Example}

\newtheorem{remark}[subsection]{Remark}

\makeatletter
\ifx\SK@label\undefined\let\SK@label\label\fi
 \let\your@thm\@thm
 \def\@thm#1#2#3{\gdef\currthmtype{#3}\your@thm{#1}{#2}{#3}}
 \def\mylabel#1{{\let\your@currentlabel\@currentlabel\def\@currentlabel
  {\currthmtype~\your@currentlabel}
 \SK@label{#1@}}\label{#1}}
 
\makeatother

\newcommand\restr[2]{{
  \left.\kern-\nulldelimiterspace 
  #1 
  \vphantom{\big|} 
  \right|_{#2} 
  }}


\DeclareMathOperator{\qz}{\mathbb{T}}

\newcommand{\upperRomannumeral}[1]{\uppercase\expandafter{\romannumeral#1}}

\setcounter{tocdepth}{1}

\begin{document}
\title{A canonical lift of Frobenius in Morava $E$-theory}

\author{Nathaniel Stapleton}
\address{Max Planck Institute for Mathematics, Bonn, Germany}
\email{nstapleton@mpim-bonn.mpg.de}

\maketitle

\begin{abstract}
We prove that the $p$th Hecke operator on the Morava $E$-cohomology of a space is congruent to the Frobenius mod $p$. This is a generalization of the fact that the $p$th Adams operation on the complex $K$-theory of a space is congruent to the Frobenius mod $p$. The proof implies that the $p$th Hecke operator may be used to test Rezk's congruence criterion.
\end{abstract}

\section{Introduction}
The $p$th Adams operation on the complex $K$-theory of a space is congruent to the Frobenius mod $p$. This fact plays a role in Adams and Atiyah's proof \cite{adamsatiyahhopf} of the Hopf invariant one problem. It also implies the existence of a canonical operation $\theta$ on $K^0(X)$ satisfying
\[
\psi^p(x) = x^p + p\theta(x),
\]
when $K^0(X)$ is torsion-free. This extra structure was used by Bousfield \cite{bousfieldlambda} to determine the $\lambda$-ring structure of the $K$-theory of an infinite loop space. There are several generalizations of the $p$th Adams operation in complex $K$-theory to Morava $E$-theory: the $p$th additive power operation, the $p$th Adams operation, and the $p$th Hecke operator. In this note, we show that the $p$th Hecke operator is a lift of Frobenius. 
 
In \cite{rezkcongruence}, Rezk studies the relationship between two algebraic structures related to power operations in Morava $E$-theory. One structure is a monad $\qz$ on the category of $E_0$-modules that is closely related to the free $E_{\infty}$-algebra functor. The other structure is a form of the Dyer-Lashof algebra for $E$, called $\Gamma$. Given a $\Gamma$-algebra $R$, each element $\sigma \in \Gamma$ gives rise to a linear endomorphism $Q_{\sigma}$ of $R$. He proves that a $\Gamma$-algebra $R$ admits the structure of an algebra over the monad $\qz$ if and only if there exists an element $\sigma \in \Gamma$ (over a certain element $\bar{\sigma} \in \Gamma/p$) such that $Q_{\sigma}$ is a lift of Frobenius in the following sense:
\[
Q_{\sigma}(r) \equiv r^p \mod pR
\]
for all $r \in R$.

We will show that $Q_{\sigma}$ may be taken to be the $p$th Hecke operator $T_p$ as defined by Ando in \cite[Section 3.6]{Isogenies}. We prove this by producing a canonical element $\sigma_{can} \in \Gamma$ lifting the Frobenius class $\bar{\sigma} \in \Gamma/p$ \cite[Section 10.3]{rezkcongruence} such that $Q_{\sigma_{can}} = T_p$. This provides us with extra algebraic structure on torsion-free algebras over the monad $\qz$ in the form of a canonical operation $\theta$ satisfying
\[
T_p(r) = r^p + p\theta(r).
\]

Let $\G_{E_0}$ be the formal group associated to $E$, a Morava $E$-theory spectrum. The Frobenius $\phi$ on $E_0/p$ induces the relative Frobenius isogeny
\[
\G_{E_0/p} \lra{} \phi^*\G_{E_0/p}
\]
over $E_0/p$. The kernel of this isogeny is a subgroup scheme of order $p$. By a theorem of Strickland, this corresponds to an $E_0$-algebra map
\[
\bar{\sigma} \colon E^0(B\Sigma_p)/I \lra{} E_0/p,
\]
where $I$ is the image of the transfer from the trivial group to $\Sigma_p$. This map further corresponds to an element in the mod $p$ Dyer-Lashof algebra $\Gamma/p$. Rezk considers the set of $E_0$-module maps $[\bar{\sigma}] \subset \hom(E^0(B\Sigma_p)/I,E_0)$ lifting $\bar{\sigma}$.

\begin{prop}
There is a canonical choice of lift $\sigma_{can} \in [\bar{\sigma}]$.
\end{prop}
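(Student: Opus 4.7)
The plan is to define $\sigma_{can}$ as the trace map of the finite free $E_0$-algebra $E^0(B\Sigma_p)/I$. By Strickland's theorem, $\Spec(E^0(B\Sigma_p)/I)$ is identified with the formal scheme $\Sub_p(\G_{E_0})$ of subgroups of order $p$ in $\G_{E_0}$, which is finite and free over $\Spf(E_0)$ of rank
\[
r := (p^n - 1)/(p-1) = 1 + p + p^2 + \cdots + p^{n-1}.
\]
One then sets $\sigma_{can} := \Tr \colon E^0(B\Sigma_p)/I \longrightarrow E_0$; this is manifestly canonical, depending only on the underlying $E_0$-algebra structure.

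To verify $\sigma_{can} \in [\bar{\sigma}]$, I would reduce modulo $p$: set $A := E^0(B\Sigma_p)/I \otimes_{E_0} E_0/p$. Since trace commutes with base change, the reduction of $\sigma_{can}$ equals $\Tr_{A/(E_0/p)}$, so it suffices to prove $\Tr_{A/(E_0/p)} = \bar{\sigma}$ as $E_0/p$-linear maps $A \to E_0/p$. The crucial geometric input is that over any characteristic $p$ base the only subgroup of order $p$ of $\G$ is the relative Frobenius kernel: any finite subgroup scheme of a (one-dimensional) formal group is infinitesimal, and within $\G[p]$ the only infinitesimal subgroup of order $p$ is $\ker(F \colon \G \to \G^{(p)})$. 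Applied to the residue field of $E_0/p$, this shows $A$ is a local ring; applied to $\mathrm{Frac}(E_0/p)$, it shows that $A_F := A \otimes_{E_0/p} \mathrm{Frac}(E_0/p)$ is a finite-dimensional local algebra over that fraction field.

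Setting $J := \ker(\bar{\sigma}) \subset A$, the maximal ideal of the Artinian ring $A_F$ equals $J \otimes_{E_0/p} \mathrm{Frac}(E_0/p)$ and is therefore nilpotent, so $J^N \otimes_{E_0/p} \mathrm{Frac}(E_0/p) = 0$ for some $N$. Since $A$ is $E_0/p$-flat and $E_0/p$ is a domain, this forces $J^N = 0$, so $J$ is a nilpotent ideal of $A$. Under the $E_0/p$-module splitting $A = (E_0/p) \oplus J$ coming from the algebra retract $\bar{\sigma}$, multiplication by any $j \in J$ is a nilpotent $E_0/p$-linear endomorphism of $A$ (since $J$ is an ideal with $J^N = 0$), so $\Tr(j) = 0$; meanwhile multiplication by $c \in E_0/p \subset A$ acts as scalar $c$ on the rank-$r$ module $A$, giving $\Tr(c) = rc$. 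Hence $\Tr(c + j) = rc$, which equals $c = \bar{\sigma}(c+j)$ in $E_0/p$ because $r - 1 = p(1 + p + \cdots + p^{n-2})$ vanishes in $E_0/p$. The main obstacle is the nilpotence step: one must establish the ``unique subgroup of order $p$'' property not only at the closed point of $\Spec(E_0/p)$ but also over its fraction field, so that flatness of $A$ over the domain $E_0/p$ promotes nilpotence in $A_F$ to nilpotence in $A$.
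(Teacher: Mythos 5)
Your definition of $\sigma_{can}$ as the algebra trace of the finite free $E$-algebra $E(B\Sigma_p)/I$ is reasonable, and in fact it produces the same map as the paper: there $\sigma_{can}$ is identified, via the character map, with the sum of the $|\Sub_p(\qz)|$ maps $E(B\Sigma_p)/I \to C_0$ classifying the order-$p$ subgroups of $\qz$, and since $C_0 \otimes_E E(B\Sigma_p)/I$ splits as a product of copies of $C_0$ indexed by $\Sub_p(\qz)$ and the trace commutes with base change, that sum is exactly your trace. The gap is in your verification of the congruence, at precisely the step you flagged as the main obstacle. It is not true that $A_F = E(B\Sigma_p)/I \otimes_{E_0} \mathrm{Frac}(E_0/p)$ is local, nor that $J = \ker\bar{\sigma}$ is nilpotent. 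Strickland's isomorphism $\Spf(E(B\Sigma_p)/I) \cong \Sub_p(\G_E)$ is an isomorphism of formal schemes, and its moduli interpretation is available for complete local (adic) base changes; after reducing mod $p$ and passing to the fraction field, the $\bar{F}$-points of $A_F$ no longer correspond to finite subgroup schemes of the formal group over $\bar{F}$. Mod $p$ the scheme $\Sub_p(\G_E)$ acquires generic points which, in the $p$-divisible group picture, correspond to subgroups with \'etale part (kernel-of-Verschiebung type), and these are invisible to the ``only order-$p$ subgroup of a formal group over a characteristic $p$ field is $\ker F$'' argument, so that argument does not bound the number of maximal ideals of $A_F$.

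The paper's own height $2$, $p=2$ example makes the failure explicit: $E(B\Sigma_2)/I \cong \Z_2\powser{u_1}[x]/(x^3-u_1x-2)$ and $\bar{\sigma}$ is the quotient by $(x)$. Mod $2$ one has $x^3 = u_1x$, hence $x^{2k+1} = u_1^k x \neq 0$, so $J = (x)$ is not nilpotent; and with $F = \F_2((u_1))$ one gets $A_F \cong F \times F[x]/(x^2-u_1)$, which is not local. So the chain ``$A_F$ local $\Rightarrow J$ nilpotent $\Rightarrow \Tr$ vanishes on $J$'' breaks down, even though the conclusion $\Tr(j) \equiv 0 \bmod p$ for $j \in J$ is still true (in the example $\Tr(x)=0$ and $\Tr(x^2)=2u_1$). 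A correct argument needs different input; the paper gets the congruence from Ganter's formula for the $K(n)$-local transfer together with goodness of $\Sigma_p$: on the span of the elements $q(y^i)$, $i \geq 1$, the map $\sigma_{can}$ is $p!$ times the integral transfer $\Tr_E$, hence divisible by $p$, while on scalars it is multiplication by $|\Sub_p(\qz)| \equiv 1 \bmod p$. Your scalar computation (rank $\equiv 1 \bmod p$) is fine; it is the vanishing of the trace on $\ker\bar{\sigma}$ modulo $p$ that requires a genuinely different justification.
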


The construction of $\sigma_{can}$ is an application of the formula for the $K(n)$-local transfer (induction) along the surjection from $\Sigma_p$ to the trivial group \cite[Section 7.3]{Ganterexponential}. 

Let $X$ be a space and let
\[
P_p/I \colon  E^0(X) \lra{} E^0(B\Sigma_p)/I \otimes_{E_0} E^0(X)
\]
be the $p$th additive power operation. The endomorphism $Q_{\sigma_{can}}$ of $E^0(X)$ is the composite of $P_p/I$ with $\sigma_{can} \otimes 1$. 
\begin{prop}
For any space $X$, the following operations on $E^0(X)$ are equal: 
\[
Q_{\sigma_{can}} = (\sigma_{can} \otimes 1)(P_p/I) = T_p.
\]
\end{prop}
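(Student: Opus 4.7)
The plan is to handle the two equalities separately. The first equality $Q_{\sigma_{can}} = (\sigma_{can} \otimes 1)(P_p/I)$ is essentially tautological: by construction, the endomorphism $Q_\sigma$ attached to any $\sigma \in \Gamma$ is the composite of $P_p/I$ with $\sigma \otimes 1$, so the equality amounts to unraveling the definitions once we know that $\sigma_{can}$ belongs to $\Gamma$, which is the content of the preceding proposition.

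The substance lies in the second equality $(\sigma_{can} \otimes 1)(P_p/I) = T_p$. The strategy is to give parallel geometric descriptions of each side and compare them via Strickland's theorem. Strickland identifies $\Spec(E^0(B\Sigma_p)/I)$ with the scheme $\Sub_p(\G_{E_0})$ parametrizing subgroups of order $p$ of the formal group $\G_{E_0}$. Under this identification, the additive power operation $P_p/I$ admits Ando's isogeny interpretation: for $x \in E^0(X)$, the element $P_p/I(x) \in E^0(B\Sigma_p)/I \otimes_{E_0} E^0(X)$ corresponds to the function on $\Sub_p(\G_{E_0})$ whose value at a subgroup $H$ records the isogeny transform $\pi_H^\ast(x)$, where $\pi_H \colon \G_{E_0} \to \G_{E_0}/H$ is the quotient isogeny.

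Next, I would unpack $\sigma_{can}$ via the $K(n)$-local transfer formula of \cite[Section 7.3]{Ganterexponential} used to build it in the preceding proposition. The goal is to show that $\sigma_{can} \colon E^0(B\Sigma_p)/I \to E_0$ is, under Strickland's identification, the ``integration'' functional on $\Sub_p(\G_{E_0})$ that evaluates a function by summing its values over every subgroup of order $p$. Granted this, the composite $(\sigma_{can} \otimes 1)(P_p/I)$ sends $x$ to
\[
\sum_{H \subset \G_{E_0},\; |H| = p} \pi_H^\ast(x),
\]
which is precisely Ando's definition of $T_p(x)$ in \cite[Section 3.6]{Isogenies}.

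The main obstacle is identifying $\sigma_{can}$ with the sum-over-subgroups functional. This requires matching Ganter's transfer formula against the Strickland dual basis of $E^0(B\Sigma_p)/I$. Morally both encode the same character-theoretic data---the transfer records a trace that decomposes as a sum indexed by the points of $\Sub_p(\G_{E_0})$---so the bookkeeping should work out. Once this compatibility is established, the two descriptions combine to yield the stated equalities.
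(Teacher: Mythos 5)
Your proposal is, at its core, the same argument as the paper's: after discarding the tautological first equality, everything reduces to identifying $\sigma_{can}$ with the sum, over the order-$p$ subgroups $H \subset \qz$, of the $C_0$-valued maps on $E(B\Sigma_p)/I$ classifying them, and then invoking Ando's definition $T_p = \sum_H \psi^H$ with $\psi^H = (H \otimes 1)(P_p/I)$. Two caveats. First, the isogeny layer you interpose is not needed and is slightly imprecise as stated: over $E_0$ the object $\Sub_p(\G_{E})$ is a scheme, not a finite set of subgroups, so a sum $\sum_{H \subset \G_{E}} \pi_H^{*}(x)$ only acquires meaning after base change to $D_\infty$ (or $C_0$), where it becomes the constant scheme on $\Sub_p(\qz)$; moreover for a general space $X$ the ``isogeny transform'' $\pi_H^{*}(x)$ has no definition independent of $\psi^H(x)$ itself, so since Ando's $T_p$ is already defined as $\sum_H (H\otimes 1)(P_p/I)$, this detour buys nothing and can be dropped. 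Second, the ``bookkeeping'' you defer with ``should work out'' is in fact the entire proof, though it is short: under the character map, the nontrivial conjugacy classes of homomorphisms $\Z_p^n \to \Sigma_p$ correspond exactly to the order-$p$ subgroups of $\qz$ (equivalently, to the $C_0$-points of $\Sub_p(\G_E)$ via $C_0 \otimes \Sub_p(\G_{E}) \cong \Sub_p(\qz)$), and an element of $E(B\Sigma_p)/I \subset E \times E(B\Sigma_p)/I$ is a class function vanishing at the trivial class, so $p!\Tr_{C_0}$ restricted to it is precisely the sum of the evaluations at these nontrivial classes, i.e.\ $\sigma_{can} = \sum_{H}(H \otimes 1)$. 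With that identification made explicit, $(\sigma_{can}\otimes 1)(P_p/I) = \sum_H \psi^H = T_p$, which is exactly how the paper concludes.
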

This has the following immediate consequence:
\begin{cor}
Let $X$ be a space such that $E^0(X)$ is torsion-free. There exists a canonical operation
\[
\theta \colon E^0(X) \lra{} E^0(X)
\]
such that, for all $x \in E^0(X)$,  
\[
T_p(x) = x^p + p\theta(x).
\]
\end{cor}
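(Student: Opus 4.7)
The plan is to deduce the corollary directly from the preceding proposition together with Rezk's characterization of $\qz$-algebras recalled in the introduction. First, since $X$ is a space, $E^0(X)$ carries a natural structure of algebra over the monad $\qz$. By Rezk's theorem, there exists some lift $\sigma \in [\bar{\sigma}]$ for which $Q_{\sigma}$ is a lift of Frobenius on $E^0(X)$. Any two lifts of $\bar{\sigma}$ differ by an $E_0$-module map $E^0(B\Sigma_p)/I \to E_0$ whose reduction mod $p$ vanishes, so they differ by $p$ times such a map; consequently the induced operations $Q_{\sigma}$ and $Q_{\sigma_{can}}$ agree modulo $p$. Therefore $Q_{\sigma_{can}}$ is also a lift of Frobenius.

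Combining this observation with the identification $T_p = Q_{\sigma_{can}}$ supplied by the preceding proposition yields
\[
T_p(x) \equiv x^p \mod{pE^0(X)} \qquad \text{for all } x \in E^0(X).
\]
In particular $T_p(x) - x^p$ is divisible by $p$. Because $E^0(X)$ is torsion-free, $p$ is a non-zerodivisor, so there is a \emph{unique} element $\theta(x) \in E^0(X)$ satisfying $T_p(x) - x^p = p\theta(x)$. Setting $\theta$ to be the function $x \mapsto (T_p(x) - x^p)/p$ gives the desired operation.

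The only conceptual point worth checking is the uniqueness of the lift $\theta$, which is automatic from torsion-freeness, and the fact that $Q_{\sigma_{can}}$ inherits the lift-of-Frobenius property from \emph{some} element of $[\bar{\sigma}]$; neither step presents a real obstacle. No naturality or additivity of $\theta$ is asserted in the statement, so the proof is completed as soon as $\theta(x)$ is defined element-wise.
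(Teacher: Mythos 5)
Your argument is correct, but it reaches the key congruence $T_p(x) \equiv x^p \bmod p$ by a genuinely different (and heavier) route than the paper. You invoke the ``only if'' direction of Rezk's congruence criterion, which forces you to first assert that $E^0(X)$ is naturally an algebra over the monad $\qz$; that is true, but it is a nontrivial input from \cite{rezkcongruence} (the homotopy of a $K(n)$-local commutative $E$-algebra, here $F(X_+,E)$, is naturally a $\qz$-algebra) which this paper never states, so as written it is an unsupported assertion and needs a citation. Your reduction from ``some lift $\sigma\in[\bar{\sigma}]$'' to $\sigma_{can}$ via divisibility of the difference by $p$ is fine (and, given lift-independence, slightly redundant: once the $\qz$-structure is granted, the congruence holds for any lift, including $\sigma_{can}$, directly). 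The paper instead argues elementarily: having shown $\sigma_{can}\in[\bar{\sigma}]$, it uses the commutative diagram built from the fact that restricting $P_p$ along $E(B\Sigma_p)\to E$ gives the $p$th power map, so that $(\sigma_{can}\otimes 1)(P_p/I)(x)\equiv x^p \bmod p$ for \emph{every} space $X$ with no torsion-freeness hypothesis, and then identifies this operation with $T_p$. The paper's route is self-contained, yields the congruence corollary for all $X$, and keeps the logical flow ``$T_p$ can be used to test Rezk's criterion'' rather than leaning on that criterion; your route buys brevity at the cost of importing Rezk's machinery and (if his theorem is applied in the form requiring torsion-free $\Gamma$-algebras) of only obtaining the congruence under the torsion-freeness hypothesis, which happens to suffice for this corollary. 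The final step is the same in both arguments: since $T_p(x)-x^p\in pE^0(X)$ and $p$ is a non-zero-divisor on the torsion-free module $E^0(X)$, the element $\theta(x)$ with $T_p(x)=x^p+p\theta(x)$ exists and is unique.
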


\emph{Acknowledgements} It is a pleasure to thank Tobias Barthel, Charles Rezk, Tomer Schlank, and Mahmoud Zeinalian for helpful discussions and to thank the Max Planck Institute for Mathematics for its hospitality.

\section{Tools} \label{tools}
Let $E$ be a height $n$ Morava $E$-theory spectrum at the prime $p$. We will make use of several tools that let us access $E$-cohomology. We summarize them in this section. 

For the remainder of this paper, let $E(X) = E^0(X)$ for any space $X$. We will also write $E$ for the coefficients $E^0$ unless we state otherwise.

\emph{Character theory} Hopkins, Kuhn, and Ravenel introduce character theory for $E(BG)$ in \cite{hkr}. They construct the rationalized Drinfeld ring $C_0$ and introduce a ring of generalized class functions taking values in $C_0$:
\[
Cl_n(G,C_0) = \{\text{$C_0$-valued functions on conjugacy classes of map from $\Z_{p}^n$ to $G$}\}.
\] 
They construct a map
\[
E(BG) \lra{} Cl_n(G,C_0)
\]
and show that it induces an isomorphism after the domain has been base-changed to $C_0$ \cite[Theorem C]{hkr}. When $n=1$, this is a $p$-adic version of the classical character map from representation theory.

\emph{Good groups} A finite group $G$ is good if the character map
\[
E(BG) \lra{} Cl_n(G,C_0)
\]
is injective. Hopkins, Kuhn, and Ravenel show that $\Sigma_{p^k}$ is good for all $k$ \cite[Theorem 7.3]{hkr}. 

\emph{Transfer maps} It follows from a result of Greenlees and Sadofsky \cite{greenlees-sadofsky} that there are transfer maps in $E$-cohomology along all maps of finite groups. In \cite[Section 7.3]{Ganterexponential}, Ganter studies the case of the transfer from $G$ to the trivial group and shows that there is a simple formula for the transfer on the level of class functions. Let 
\[
\Tr_{C_0} \colon Cl_n(G,C_0) \lra{} C_0
\]
be given by the formula $f \mapsto \frac{1}{|G|}\sum_{[\al]}f([\al])$, where the sum runs over conjugacy classes of maps $\al \colon \Z_{p}^n \rightarrow G$. Ganter shows that there is a commutative diagram
\[
\xymatrix{E(BG) \ar[r]^-{\Tr_{E}} \ar[d] & E \ar[d] \\ Cl_n(G) \ar[r]^-{\Tr_{C_0}} & C_0,}
\]
in which the vertical maps are the character map.

\emph{Subgroups of formal groups} Let $\G_{E} = \Spf(E(BS^1))$ be the formal group associated to the spectrum $E$. In \cite{etheorysym}, Strickland produces a canonical isomorphism
\[
\Spf(E(B\Sigma_{p^k})/I) \cong \Sub_{p^k}(\G_{E}),
\]
where $I$ is the image of the transfer along $\Sigma_{p^{k-1}}^{\times p} \subset \Sigma_{p^k}$ and $\Sub_{p^k}(\G_{E})$ is the scheme that classifies subgroup schemes of order $p^k$ in $\G_E$. We will only need the case $k=1$.

\emph{The Frobenius class} The relative Frobenius is a degree $p$ isogeny of formal groups
\[
\G_{E/p} \lra{} \phi^*\G_{E/p},
\]
where $\phi \colon E/p \rightarrow E/p$ is the Frobenius. The kernel of the map is a subgroup scheme of order $p$. Using Strickland's result, there is a canonical map of $E$-algebras
\[
\bar{\sigma} \colon E(B\Sigma_p)/I \lra{} E/p
\]
picking out the kernel. In \cite[Section 10.3]{rezkcongruence}, Rezk describes this map in terms of a coordinate and considers the set of $E$-module maps $[\bar{\sigma}] \subset \hom(E(B\Sigma_p),E)$ that lift $\bar{\sigma}$.

\emph{Power operations} In \cite{structuredmoravae}, Goerss, Hopkins, and Miller prove that the spectrum $E$ admits the structure of an $E_{\infty}$-ring spectrum in an essentially unique way. This implies a theory of power operations. These are natural multiplicative non-additive maps
\[
P_m  \colon  E(X) \lra{} E(B\Sigma_{m}) \otimes_E E(X)
\]
for all $m>0$. For $m=p^k$, they can be simplified to obtain interesting ring maps by further passing to the quotient
\[
P_{p^k}/I \colon E(X) \lra{} E(B\Sigma_{p^k}) \otimes_E E(X) \lra{} E(B\Sigma_{p^k})/I \otimes_E E(X),
\]
where $I$ is the transfer ideal that appeared above.

\emph{Hecke operators} In \cite[Section 3.6]{Isogenies}, Ando produces operations 
\[
T_{p^k} \colon E(X) \lra{} E(X)
\]
by combining the structure of power operations, Strickland's result, and ideas from character theory. Let $\qz = (\Q_p/\Z_p)^n$, let $H \subset \qz$ be a finite subgroup, and let $D_{\infty}$ be the Drinfeld ring at infinite level so that $\Spf(D_{\infty}) = \Level(\qz,\G_{E})$ and $\Q \otimes D_{\infty} = C_0$. Ando constructs an Adams operation depending on $H$ as the composite
\[
\psi^H \colon E(X) \lra{P_p/I} E(B\Sigma_p)/I \otimes_E E(X) \lra{H\otimes 1} D_{\infty} \otimes_E E(X).
\] 
He then defines the $p^k$th Hecke operator 
\[
T_{p^k} = \sum_{\substack{H \subset \qz \\ |H| = p^k}} \psi^H
\]
and shows that this lands in $E(X)$. 

\section{A canonical representative of the Frobenius class}
We construct a canonical representative of the set $[\bar{\sigma}]$. The construction is an elementary application of several of the tools presented in the previous section.

We specialize the transfers of the previous section to $G = \Sigma_p$. Let 
\[
\Tr_E \colon E(B\Sigma_p) \lra{} E
\]
be the transfer from $\Sigma_p$ to the trivial group and let
\[
\Tr_{C_0} \colon Cl_n(\Sigma_p, C_0) \lra{} C_0
\]
be the transfer in class functions from $\Sigma_p$ to the trivial group. This is given by the formula
\[
\Tr_{C_0}(f) = \frac{1}{p!}\sum_{[\al]}f([\al]).
\]
Recall that $\qz = (\Q_p/\Z_p)^n$ and let $\Sub_p(\qz)$ be the set of subgroups of order $p$ in $\qz$.

\begin{lemma} \label{sigmap} \cite[Section 4.3.6]{marshthesis}
The restriction map along $\Z/p \subseteq \Sigma_p$ induces an isomorphism
\[
E(B\Sigma_p) \lra{\cong} E(B\Z/p)^{\Aut(\Z/p)}.
\]
After a choice of coordinate $x$,
\[
E(B\Sigma_p) \cong E[y]/(yf(y)),
\]
where the degree of $f(y)$ is 
\[
|\Sub_p(\qz)| = \frac{p^n-1}{p-1} = \sum_{i=0}^{n-1}p^i, 
\]
$f(0)=p$, and $y$ maps to $x^{p-1}$ in $E(B\Z/p) \cong E\powser{x}/[p](x)$.
\end{lemma}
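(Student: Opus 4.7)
The plan is to prove both claims together. For the isomorphism $E(B\Sigma_p) \cong E(B\Z/p)^{\Aut(\Z/p)}$, note that $\Z/p \subseteq \Sigma_p$ is a Sylow $p$-subgroup with Weyl group $\Aut(\Z/p) = (\Z/p)^\times$ and index $(p-1)!$, which is a unit in the $p$-complete ring $E$. The composite $\Res \circ \Tr$ equals multiplication by this index, so restriction is split injective, and the image lies in the $(\Z/p)^\times$-invariants because the Weyl group action is induced by conjugation inside $\Sigma_p$. To upgrade injectivity to an isomorphism I would compare $E$-ranks on the two sides. A weight-space computation on $E(B\Z/p) = E\powser{x}/[p](x)$ (carried out below) shows that the invariants are free of rank $1 + (p^n-1)/(p-1)$; the same count falls out on the $E(B\Sigma_p)$ side via the character theory of Section~\ref{tools} and the fact that $\Sigma_p$ is good, because conjugacy classes of homomorphisms $\Z_p^n \to \Sigma_p$ decompose as the trivial homomorphism together with the $(p^n-1)/(p-1)$ orbits of surjections $\Z_p^n \twoheadrightarrow \Z/p$ modulo $\Aut(\Z/p)$.

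For the explicit presentation, the crucial step is to replace the initial coordinate $x$ by one that linearizes the $(\Z/p)^\times$-action on $\G_E$. Since $p-1$ is a unit in $E$, the averaged power series $\tilde x = \tfrac{1}{p-1}\sum_{a \in (\Z/p)^\times} a^{-1}[a](x)$ is again a coordinate (as $\tilde x = x + O(x^2)$), and a direct computation using commutativity of the $[a]$ shows $[b](\tilde x) = b \tilde x$ for every $b \in (\Z/p)^\times$. After relabeling I assume $[a](x) = ax$. The identity $[p]([a](x)) = [a]([p](x))$ then forces every nonzero monomial of $[p](x)$ to have degree $\equiv 1 \pmod{p-1}$, so $[p](x) = x \cdot \tilde f(y)$ for some power series $\tilde f$ in $y := x^{p-1}$. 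Since $[p](x)/x$ is a distinguished power series in $x$ of degree $p^n - 1$ involving only monomials in $y$, it is a distinguished power series in $y$ of degree $(p^n-1)/(p-1)$, and Weierstrass preparation produces a polynomial $f \in E[y]$ of that degree with $f(0) = p$ (up to a unit) satisfying $x f(y) = 0$ in $E(B\Z/p)$.

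With the linearized coordinate, the $(\Z/p)^\times$-action on $E\powser{x}/[p](x)$ is diagonal: $x^k$ has weight $k$, so the invariant subring has $E$-basis $\{1, y, y^2, \ldots, y^r\}$ where $r = (p^n-1)/(p-1)$, and hence $E(B\Sigma_p)$ is a quotient of $E[y]$ of $E$-rank $r+1$. The relation $y f(y) = x^{p-2} \cdot x f(y) = 0$ holds in the quotient. Conversely, any $g(y) \in E[y]$ mapping to zero satisfies $g(x^{p-1}) = x f(y) \cdot h(x)$ in $E\powser{x}$ for some $h(x)$; matching the residues mod $p-1$ of the exponents appearing on each side forces $h(x) = x^{p-2} h_1(x^{p-1})$, whence $g(y) = y f(y) h_1(y)$. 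Therefore $E(B\Sigma_p) \cong E[y]/(yf(y))$ with $y \mapsto x^{p-1}$.

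The main obstacle I anticipate is the construction and verification of the linearizing coordinate $\tilde x$, together with the claim that, in that coordinate, $[p](x)$ really lies in the subring $x \cdot E\powser{y}$. Once these are in place, the computation of the invariants and the relation $yf(y) = 0$ reduce to a clean weight-grading argument; without the change of coordinates, exhibiting an explicit invariant mapping exactly to $x^{p-1}$ and controlling its relations would be considerably more delicate.
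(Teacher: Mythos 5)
The paper offers no internal proof of this lemma (it is quoted from Marsh's thesis), so I can only assess your argument on its own terms; your overall strategy -- split injectivity of restriction to the Sylow subgroup, a rank count via character theory, diagonalization of the $\Aut(\Z/p)$-action in a good coordinate, and Weierstrass preparation -- is the standard route and does prove the lemma. However, two steps are false as literally stated and need repair. First, the transfer step: the relevant composite is $\Tr\circ\Res$ (not $\Res\circ\Tr$, which is governed by the double coset formula), and in Morava $E$-theory it is multiplication by $\Tr(1)\in E(B\Sigma_p)$, which for a finite cover is \emph{not} the integer index: already for the trivial subgroup of $\Z/p$ one has, up to a unit, $\Tr(1)=[p](x)/x\neq p$ -- this is precisely why the transfer ideal $I$ is nontrivial. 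The standard fix is to note that on ordinary mod $p$ (co)homology $\Tr\circ\Res$ \emph{is} multiplication by $(p-1)!$, so the composite $\Sigma^\infty_+B\Sigma_p\to\Sigma^\infty_+B\Z/p\to\Sigma^\infty_+B\Sigma_p$ is an equivalence after $p$-completion; since $E$ is $K(n)$-local, hence $p$-complete, restriction is split injective on $E$-cohomology. With that repaired your rank argument closes up: the cokernel of $E(B\Sigma_p)\hookrightarrow E(B\Z/p)^{\Aut(\Z/p)}$ is a summand of a free module over the complete local ring $E$, hence free, and HKR Theorem C (goodness is not needed for a rank count) forces its rank to be $0$.

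Second, the linearization: the identity $[b](\tilde x)=b\,\tilde x$ cannot hold as a power-series identity for an integer $b\neq\pm1$ -- applying the logarithm over $\Q\otimes E$ it would force $\G_E$ to be additive. The averaging must be done over the Teichm\"uller roots of unity $\mu_{p-1}\subset\Z_p^\times$, acting through the formal group: $\tilde x=\tfrac1{p-1}\sum_{\zeta\in\mu_{p-1}}\zeta^{-1}[\zeta](x)$ satisfies $[\zeta](\tilde x)=\zeta\tilde x$ exactly, and the action of $a\in(\Z/p)^\times$ on $E(B\Z/p)$ agrees with that of $[\omega(a)]$ modulo $([p](x))$, which is all the invariant computation needs. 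Your exponent-filtering argument for $[p](x)$ then goes through verbatim (using that $\zeta^{k-1}-1$ is a unit when $\zeta^{k-1}\neq1$), as does the weight argument identifying the kernel of $E[y]\to E(B\Z/p)^{\Aut(\Z/p)}$ (or, more simply, observe that $E[y]/(yf(y))\to E(B\Z/p)^{\Aut(\Z/p)}$ is a surjection of free $E$-modules of the same rank $1+(p^n-1)/(p-1)$, hence an isomorphism). Finally, Weierstrass preparation only gives $f(0)=p$ up to a unit, as you note; since the lemma does not require $f$ to be monic, rescale $f$ by that unit to get $f(0)=p$ exactly.
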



\begin{lemma} \cite[Proposition 4.2]{Quillenelementary}
After choosing a coordinate, there is an isomorphism 
\[
E(B\Sigma_p)/I \cong E[y]/(f(y)),
\]
and the ring is free of rank $|\Sub_p(\qz)|$ as an $E$-module.
\end{lemma}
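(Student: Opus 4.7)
The plan is to combine Lemma~\ref{sigmap} with Strickland's theorem. By Lemma~\ref{sigmap}, $E(B\Sigma_p)\cong E[y]/(yf(y))$ is free of $E$-rank $|\Sub_p(\qz)|+1$. Strickland's theorem identifies $\Spf(E(B\Sigma_p)/I)\cong \Sub_p(\G_E)$, which is finite flat over $\Spf E$ of rank $|\Sub_p(\qz)|$ (the number of subgroups of order $p$ in an $n$-dimensional formal group over a geometric point). Hence the quotient $E(B\Sigma_p)/I$ is a free $E$-module of rank $\deg f = |\Sub_p(\qz)|$.

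This reduces the lemma to identifying $I$ with the ideal $(f(y))\subset E[y]/(yf(y))$. The further quotient $E[y]/(yf(y),f(y)) = E[y]/(f(y))$ is a free $E$-module of rank $\deg f$, matching that of $E(B\Sigma_p)/I$. Thus, once we verify $f(y)\in I$, the induced $E$-algebra surjection $E[y]/(f(y))\twoheadrightarrow E(B\Sigma_p)/I$ between free $E$-modules of equal finite rank is automatically an isomorphism.

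To prove $f(y)\in I$, observe that $I$ is principal, generated by $\tau := \Tr^{\Sigma_p}_{1}(1)$. Factoring the transfer through $1\subset\Z/p\subset\Sigma_p$ and using the classical identity $\Tr^{\Z/p}_1(1) = [p](x)/x$ in $E(B\Z/p) = E\powser{x}/[p](x)$ gives $\tau = \Tr^{\Sigma_p}_{\Z/p}([p](x)/x)$. The restriction $E(B\Sigma_p)\hookrightarrow E(B\Z/p)$ of Lemma~\ref{sigmap} is injective and identifies $E(B\Sigma_p)$ with $E(B\Z/p)^{\Aut(\Z/p)}$ (note that $p-1$ is a unit in $E$), with $y\mapsto x^{p-1}$. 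Computing the image of $\tau$ under this restriction --- morally the $\Aut(\Z/p)$-average of $[p](x)/x$ --- yields an $\Aut(\Z/p)$-invariant element with constant term a unit multiple of $p$ and $x$-degree $(p-1)\deg f$. A direct comparison with $f(x^{p-1}) = p + c_1 x^{p-1} + \ldots$ shows this image equals $f(x^{p-1})$ up to a unit, so injectivity of the restriction yields $\tau = u\cdot f(y)$ for a unit $u\in E$, proving $f(y)\in I$. The main obstacle is the explicit identification of $\Tr^{\Sigma_p}_{\Z/p}$ with the averaging operation, which can be handled either via Ganter's transfer formula in HKR character theory (since $\Sigma_p$ is good) or by a Mackey double-coset argument exploiting $N_{\Sigma_p}(\Z/p) = \Z/p\rtimes\Aut(\Z/p)$.
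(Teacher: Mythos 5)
Your overall skeleton is reasonable, and since the paper itself offers no argument (it simply cites Quillen's Proposition 4.2), any complete proof would do: reduce to a statement about the transfer ideal, use Frobenius reciprocity to see $I=(\tau)$ with $\tau=\Tr_1^{\Sigma_p}(1)$, and upgrade a surjection between free modules of equal rank to an isomorphism (citing Strickland for the rank of $\Sub_p(\G_E)$ is heavy machinery for what Quillen does by hand, but it is legitimate within this paper's toolkit). The transitivity step $\tau=\Tr^{\Sigma_p}_{\Z/p}(\Tr_1^{\Z/p}(1))$ and the standard fact that $\mathrm{im}(\Tr_1^{\Z/p})$ is generated by $[p](x)/x$ are also fine, though the latter deserves a citation; note also that by the double coset formula the restriction of $\tau$ to $\Z/p$ is $(p-1)!\cdot\Tr_1^{\Z/p}(1)$, not an $\Aut(\Z/p)$-average.

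The genuine gap is the sentence ``a direct comparison \ldots shows this image equals $f(x^{p-1})$ up to a unit, so injectivity of the restriction yields $\tau=u\cdot f(y)$ for a unit $u\in E$.'' First, elements of $E\powser{x}/[p](x)$ have no well-defined $x$-degree, and matching a constant term and a purported degree does not force two elements of this ring to agree up to a unit; this is exactly the point that needs proof. What is true, and provable, is an equality of ideals: from $yf(y)=0$ and $y\mapsto x^{p-1}$ one gets $x^{p-1}f(x^{p-1})\in([p](x))\subset E\powser{x}$, and cancelling powers of $x$ (using that $E$ is torsion-free and $[p](x)/x$ has constant term $p$) gives $f(x^{p-1})=h(x)\cdot[p](x)/x$ in $E\powser{x}$; comparing Weierstrass orders mod $\m$ (both sides have order $p^n-1$, using that $f$ has unit leading coefficient) shows $h$ is a unit power series. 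Second, even with this in hand, the relating unit lives in $E(B\Z/p)$, not in $E$ or in $E(B\Sigma_p)$, so injectivity of the restriction does \emph{not} give $\tau=u f(y)$ with $u\in E$ (that claim is in general false), nor directly an ideal containment inside $E(B\Sigma_p)$. To transport the statement down you need Frobenius reciprocity for $\Tr^{\Sigma_p}_{\Z/p}$: writing $\Tr_1^{\Z/p}(1)=w\cdot f(x^{p-1})$ with $w$ a unit of $E(B\Z/p)$, transitivity gives $\tau=\Tr^{\Sigma_p}_{\Z/p}(w)\cdot f(y)$, hence $I\subseteq(f(y))$; and since $[\Sigma_p:\Z/p]=(p-1)!$ is invertible, $\Tr^{\Sigma_p}_{\Z/p}$ is surjective, which yields the reverse containment (or, with only one containment, your rank count via Strickland finishes). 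With these repairs the strategy works, but as written the key identification is asserted rather than proved, and the asserted form of it (unit in $E$, deduced from injectivity of restriction) is not correct.
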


After choosing a coordinate, the restriction map $E(B\Sigma_p) \rightarrow E$ sends $y$ to $0$ and the map
\[
E(B\Sigma_p) \rightarrow E(B\Sigma_p)/I
\]
is the quotient by the ideal generated by $f(y)$.

\begin{lemma} \label{index}
The index of the $E$-module $E(B\Sigma_p)$ inside $E \times E(B\Sigma_p)/I$ is $p$.
\end{lemma}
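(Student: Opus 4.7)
The plan is to exploit the explicit presentations of $E(B\Sigma_p)$ and $E(B\Sigma_p)/I$ from the previous two lemmas. After choosing a coordinate, $E(B\Sigma_p) \cong E[y]/(yf(y))$ and $E(B\Sigma_p)/I \cong E[y]/(f(y))$, with $f$ of degree $s := |\Sub_p(\qz)|$ and, crucially, $f(0) = p$. Moreover, the restriction $E(B\Sigma_p) \to E$ identifies with the quotient $E[y]/(yf(y)) \to E[y]/(y) \cong E$. Thus the map in question becomes the Chinese--Remainder--type map
\[
\Delta \colon E[y]/(yf(y)) \longrightarrow E[y]/(y) \oplus E[y]/(f(y)),
\]
and the lemma is equivalent to the claim that its cokernel is isomorphic to $E/p$.

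To prove this I would exhibit a short exact sequence
\[
0 \longrightarrow E[y]/(yf(y)) \lra{\Delta} E[y]/(y) \oplus E[y]/(f(y)) \lra{\epsilon} E/p \longrightarrow 0,
\]
where $\epsilon(a, g(y)) := a - g(0) \bmod p$. Well-definedness of $\epsilon$ is where the input $f(0) = p$ enters: shifting $g$ by a multiple of $f(y)$ changes $g(0)$ by a multiple of $p$. Surjectivity of $\epsilon$ is immediate. For exactness in the middle, given $(a,g)$ with $a \equiv g(0) \bmod p$, represent $g$ by a polynomial of degree $<s$ and set $h = g + c f(y)$ with $c \in E$ chosen so that $cp = a - g(0)$; then $\deg h \leq s$, and $h$ maps to $(a,g)$. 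For injectivity of $\Delta$, a kernel element is represented by a polynomial $h$ of degree $\leq s$ with $h(0) = 0$ and $h \equiv c f(y) \pmod{yf(y)}$ for some $c \in E$; evaluating at $0$ forces $cp = 0$, and since $E$ is $p$-torsion free, $c = 0$, so $h$ itself lies in $(yf(y))$.

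The whole argument is essentially formal once one has the constant term of $f$ from Lemma \ref{sigmap}. The only points requiring care are interpreting ``index $p$'' as the statement that the cokernel is $E/p$, and invoking $p$-torsion freeness of $E$ in the injectivity step; neither poses a serious obstacle.
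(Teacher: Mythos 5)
Your proof is correct and takes essentially the same route as the paper's: both work in the coordinate presentation, and your cokernel map $(a,g)\mapsto a-g(0) \bmod p$ is exactly the quotient by the span $\{(1,1),(0,y),\ldots,(0,y^{m-1}),(0,p)\}$ that the paper exhibits as the image, with $f(0)=p$ doing the key work in each case. The only (welcome) addition is that you verify injectivity of the comparison map explicitly, which the paper leaves implicit.
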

\begin{proof}
This can be seen using the coordinate. There is a basis of $E(B\Sigma_p)$ given by the set $\{1, y, \ldots, y^m\}$, where $m = |\Sub_p(\qz)|$, and a basis of $E \times E(B\Sigma_p)/I$ given by 
\[
\{(1,0),(0,1),(0,y), \ldots, (0,y^{m-1})\}.
\] 
By Lemma \ref{sigmap}, the image of the elements $\{1, y, \ldots, y^{m-1}, p-f(y)\}$ in $E(B\Sigma_p)$ is the set
\[
\{(1,1),(0,y), \ldots, (0,y^{m-1}), (0,p)\}
\]
in $E \times E(B\Sigma_p)/I$. The image of $y^m$ is in the span of these elements and the submodule generated by these elements has index $p$.
\end{proof}

\begin{lemma} \cite[Section 10.3]{rezkcongruence}
In terms of a coordinate, the Frobenius class
\[
\bar{\sigma} \colon E(B\Sigma_p)/I \lra{} E/p
\] 
is the quotient by the ideal $(y)$.
\end{lemma}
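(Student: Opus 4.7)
My plan is to show $\bar{\sigma}(y) = 0 \in E/p$ by exploiting a Hensel decomposition of $E(B\Sigma_p)/I$ over $E$ together with Strickland's description of subgroup schemes. Since $\bar{\sigma}$ is an $E$-algebra map out of $E[y]/(f(y))$ and $f(0) = p$ vanishes in $E/p$, the substitution $y \mapsto 0$ is a well-defined $E$-algebra quotient, so the lemma is equivalent to the identification $\bar{\sigma}(y) = 0$.

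First I would reduce $f$ modulo the maximal ideal $\m_E$, obtaining $\bar{f}(y) = y \bar{g}(y)$ with $\bar{g}(0)$ a unit (i.e.\ the coefficient $a_1$ of $y$ in $f(y)$ is a unit in $E$). Hensel's lemma then produces a unique lift $\alpha \in \m_E$ of this simple root, and the Chinese Remainder Theorem yields a decomposition
\[
E(B\Sigma_p)/I \cong E \times E[y]/(f_2(y))
\]
whose first projection sends $y \mapsto \alpha$. Under Strickland's theorem, this first factor classifies a distinguished order $p$ subgroup $H_{\mathrm{can}} \subset \G_E$ defined over $E$.

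The second step is to identify $H_{\mathrm{can}} \otimes_E E/p$ with the kernel of the relative Frobenius on $\G_{E/p}$, so that the composite $E(B\Sigma_p)/I \twoheadrightarrow E \twoheadrightarrow E/p$ is precisely the Frobenius class $\bar{\sigma}$. This is the step I expect to be the main obstacle; it is a deformation-theoretic claim whose content is that the Hensel lift of the ``Frobenius point'' $y = 0 \in \Sub_p(\G_{E/\m_E})$ is necessarily the connected, infinitesimal subgroup, as opposed to any of the \'etale lifts parametrized by $f_2$.

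Finally I would evaluate $\alpha$ modulo $p$. Writing $f(y) = p + y h(y)$, the equation $f(\alpha) = 0$ becomes $\alpha \cdot h(\alpha) = -p$. Since $\alpha \in \m_E$ we have $h(\alpha) \equiv h(0) = a_1 \pmod{\m_E}$, which is a unit in $E$, so $\alpha \in (p)$ and $\bar{\sigma}(y) = \alpha \bmod p = 0$.
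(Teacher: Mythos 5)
Your first step is where the argument breaks, not the step you flagged. The claim that the linear coefficient $a_1$ of $f$ is a unit is false whenever $n\ge 2$: modulo the maximal ideal $\m\subset E$ the $p$-series becomes $x^{p^n}$ times a unit, so (taking $f$ monic) $\bar f(y)=y^{m}$ with $m=|{\Sub_p(\qz)}|$, reflecting the fact that the only order-$p$ subgroup of $\G_{E/\m}$ is the kernel of Frobenius. Thus $a_1\in\m$, the root $y=0$ of $\bar f$ has multiplicity $m$, Hensel's lemma produces no lift, and there is no CRT splitting: $E(B\Sigma_p)/I$ is finite free over the complete local ring $E$ with local reduction $(E/\m)[y]/(y^m)$, hence is itself local and has no nontrivial idempotents, so no decomposition $E\times E[y]/(f_2(y))$ exists. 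The paper's own height $2$, $p=2$ example makes this concrete: $f(y)=y^3-u_1y-2$ over $\Z_2\powser{u_1}$ has linear coefficient $-u_1\in\m=(2,u_1)$ and no root in $E$ (a root would lie in $\m$ and force $2=\alpha^3-u_1\alpha\in\m^2$, which is false). Your final computation ($h(\alpha)$ a unit, hence $\alpha\in(p)$) relies on the same false unit claim. Conceptually, the kernel of Frobenius does not lift to a subgroup of $\G_E$ over $E$ (there is no canonical subgroup over the whole deformation space); this is precisely why Rezk's $[\bar\sigma]$ consists of $E$-\emph{module} lifts and why the paper's $\sigma_{can}$ is a sum of ring maps into $C_0$ rather than a ring map. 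Your plan, if it worked, would produce a canonical $E$-algebra lift of $\bar\sigma$, which is too much.

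For comparison: the paper does not prove this lemma at all — it imports it from Rezk \cite[Section 10.3]{rezkcongruence}. The verification there is a direct computation over $E/p$ in the coordinate, with no integral lifting anywhere: the Frobenius kernel is the divisor cut out by $x^p$ in $(E/p)\powser{x}$, and under the presentation of Lemma \ref{sigmap}, where $y$ corresponds to $x^{p-1}$, one checks that the classifying map of this subgroup sends $y$ to $0$ (roughly, $y$ goes to the product of the coordinates of the non-identity points of the classified subgroup, which vanishes for the divisor $x^p$); since $f(0)=p\equiv 0 \bmod p$, killing $y$ is exactly the quotient by the ideal $(y)$. So the productive route is to compute $\bar\sigma(y)$ directly mod $p$ from Strickland's description, not to construct an $E$-point of $\Sub_p(\G_E)$, which in general does not exist.
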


Now we modify $\Tr_{C_0}$ to construct a map
\[
\sigma_{can} \colon E(B\Sigma_p)/I \lra{} E.
\]
By Ganter's result \cite[Section 7.3]{Ganterexponential} and the fact that $\Sigma_p$ is good, the restriction of $\Tr_{C_0}$ to $E(B\Sigma_{p})$ is equal to $\Tr_{E}$. It makes sense to restrict $\Tr_{C_0}$ to 
\[
E \times E(B\Sigma_p)/I \subset Cl_n(\Sigma_p,C_0).
\]
Lemma \ref{index} implies that this lands in $\frac{1}{p}E$. Thus we see that the target of the map
\[
\restr{p!\Tr_{C_0}}{E \times E(B\Sigma_p)/I}
\]
can be taken to be $E$. We may further restrict this map to the subring $E(B\Sigma_p)/I$ to get
\[
\restr{p!\Tr_{C_0}}{E(B\Sigma_p)/I} \colon E(B\Sigma_p)/I \lra{} E.
\]
From the formula for $\Tr_{C_0}$, for $e \in E \subset E(B\Sigma_p)/I$, we have
\[
\restr{p!\Tr_{C_0}}{E(B\Sigma_p)/I}(e) = |\Sub_p(\qz)|e.
\]
Note that $|\Sub_p(\qz)|$ is congruent to $1$ mod $p$ (and therefore a $p$-adic unit). We set
\[
\sigma_{can} = \restr{p!\Tr_{C_0}}{E(B\Sigma_p)/I}.
\]

\begin{remark}
One may also normalize $\sigma_{can}$ by dividing by $|\Sub_p(\qz)|$ so that $e$ is sent to $e$.
\end{remark}


We now show that $\sigma_{can}$ fits in the diagram
\[
\xymatrix{& E \ar[d] \\ E(B\Sigma_p)/I \ar[r]_-{\bar{\sigma}} \ar[ru]^-{\sigma_{can}} & E/p,}
\]
where $\bar{\sigma}$ picks out the kernel of the relative Frobenius.

\begin{prop}
The map
\[
\sigma_{can} \colon E(B\Sigma_p)/I \lra{} E
\]
is a representative of Rezk's Frobenius class.
\end{prop}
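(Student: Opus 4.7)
My plan is to verify the congruence $\sigma_{can} \equiv \bar{\sigma} \pmod p$ by computing both $E$-linear maps on the $E$-basis $\{1, y, y^2, \ldots, y^{m-1}\}$ of $E(B\Sigma_p)/I \cong E[y]/(f(y))$, where $m = |\Sub_p(\qz)|$. Since $\bar{\sigma}$ is the quotient by the ideal $(y)$, it sends $1 \mapsto 1$ and $y^i \mapsto 0$ for $i \geq 1$; it suffices to match these values modulo $p$. The constant case is already settled by the paragraph preceding the proposition: that formula gives $\sigma_{can}(1) = |\Sub_p(\qz)| = 1 + p + \cdots + p^{n-1} \equiv 1 \pmod p$.

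The content of the argument is the case $i \geq 1$. The key observation is that the inclusion $E(B\Sigma_p) \hookrightarrow E \times E(B\Sigma_p)/I$ exhibited in the proof of Lemma \ref{index} sends $y^i$ to $(0, y^i)$, since $y$ restricts to $0$ along $\{1\} \subset \Sigma_p$, and $(0, y^i)$ is precisely the image of $y^i \in E(B\Sigma_p)/I$ under the inclusion into the second factor. Hence these two realizations of $y^i$ determine the same class function in $Cl_n(\Sigma_p, C_0)$. Ganter's theorem now identifies $\Tr_{C_0}$ on $E(B\Sigma_p)$ with the cohomological transfer $\Tr_E$, so
\[
\sigma_{can}(y^i) \;=\; p!\, \Tr_{C_0}(y^i) \;=\; p!\, \Tr_E(y^i) \;\in\; p!\cdot E \;\subseteq\; pE,
\]
giving $\sigma_{can}(y^i) \equiv 0 \pmod p$, as required.

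The only delicate point is this class-function identification: one must check that $(0, y^i)$ genuinely lifts to an element of $E(B\Sigma_p)$, which is what allows Ganter's formula to be invoked; this is exactly where the hypothesis $i \geq 1$ enters, since the element $(0,1)$ does \emph{not} lift to $E(B\Sigma_p)$ and must instead be handled by the explicit computation already done. Once the lifting observation is in hand, the divisibility by $p$ is automatic from the trivial fact $p \mid p!$, and no further obstacle arises.
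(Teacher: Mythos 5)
Your argument is correct and is essentially the paper's own proof: both handle the constant term via $\sigma_{can}(e)=|\Sub_p(\qz)|e\equiv e \bmod p$, and both handle $y^i$ ($i\geq 1$) by observing that $y^i\in E(B\Sigma_p)$ maps to $(0,y^i)$ in $E\times E(B\Sigma_p)/I$ (since $y$ restricts to $0$), so that Ganter's identification $\Tr_{C_0}=\Tr_E$ on $E(B\Sigma_p)$ gives $\sigma_{can}(y^i)=p!\,\Tr_E(y^i)\in pE$. Your explicit remark that $(0,1)$ does not lift to $E(B\Sigma_p)$, which is why the constant case needs the separate computation, is a nice clarification of a point the paper leaves implicit.
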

\begin{proof}
We may be explicit. Choose a coordinate so that the quotient map
\[
q \colon E(B\Sigma_p) \lra{} E(B\Sigma_p)/I
\] 
is given by
\[
q \colon E[y]/(yf(y)) \lra{} E[y]/(f(y)).
\]
We must show that 
\[
\xymatrix{E(B\Sigma_p)/I \ar[r]^-{\sigma_{can}} & E \ar[r]^-{\text{mod } p} & E/p}
\]
is the quotient by the ideal $(y) \subset E(B\Sigma_p)/I$.

There is a basis of $E(B\Sigma_p)$ (as an $E$-module) given by $\{1,y,\ldots,y^{m}\}$, where $m = |\Sub_p(\qz)|$. We will be careful to refer to the image of $y^i$ in $E(B\Sigma_p)/I$ as $q(y^i)$. For the basis elements of the form $y^i$, where $i \neq 0$, the restriction map $E(B\Sigma_p) \rightarrow E$ sends $y^i$ to $0$. Thus
\[
\Tr_{E}(y^i) = \restr{\Tr_{C_0}}{E(B\Sigma_p)/I}(q(y^i)) \in E.
\]
Now the definition of $\sigma_{can}$ implies that $\sigma_{can}(q(y^i))$ is divisible by $p$. So 
\[
\sigma_{can}(q(y^i)) \equiv 0 \mod p.
\]
It is left to show that, for $e$ in the image of $E \rightarrow E(B\Sigma_p)/I$, 
\[
\sigma_{can}(e) \equiv e \mod p.
\]
We have already seen that
\[
\restr{p!\Tr_{C_0}}{E(B\Sigma_p)/I}(e) = |\Sub_p(\qz)|e.
\]
The result follows from the fact that $|\Sub_p(\qz)| \equiv 1$ mod $p$.
\end{proof}

\section{The Hecke operator congruence}
We show that the $p$th additive power operation composed with $\sigma_{can}$ is the $p$th Hecke operator. This implies that the Hecke operator satisfies a certain congruence.

The two maps in question are the composite
\[
\xymatrix{E(X) \ar[r]^-{P_p/I} & E(B\Sigma_p)/I \otimes_E E(X) \ar[r]^-{\sigma_{can} \otimes 1}& E(X)}
\]
and the Hecke operator $T_p$ described in Section \ref{tools}. 

\begin{prop}
The $p$th additive power operation composed with the canonical representative of the Frobenius class is equal to the $p$th Hecke operator: 
\[
(\sigma_{can} \otimes 1)(P_{p}/I) = T_p.
\]
\end{prop}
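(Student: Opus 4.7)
Since both $T_p$ and $(\sigma_{can} \otimes 1)(P_p/I)$ factor through $P_p/I$, it suffices to prove the identity of $E$-module maps
\[
\sigma_{can} = \sum_{\substack{H \subset \qz \\ |H| = p}} H \colon E(B\Sigma_p)/I \longrightarrow E,
\]
where each $H \colon E(B\Sigma_p)/I \to D_{\infty}$ is determined by Strickland's identification $\Spf(E(B\Sigma_p)/I) = \Sub_p(\G_E)$ after base change to $D_{\infty}$, and the sum is known to land in $E$ by Ando's construction of $T_p$.

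I would verify this identity inside $C_0$ via character theory. Since $\Sigma_p$ is good, the character map $E(B\Sigma_p) \hookrightarrow Cl_n(\Sigma_p,C_0)$ is injective, and combined with the index-$p$ containment $E(B\Sigma_p) \subset E \times E(B\Sigma_p)/I$ of Lemma~\ref{index} one obtains an embedding under which the image of $E(B\Sigma_p)/I \hookrightarrow E \times E(B\Sigma_p)/I$ by $r \mapsto (0,r)$ lands in the class functions that vanish on the trivial conjugacy class. For such class functions Ganter's formula gives
\[
\sigma_{can}(x) = p!\,\Tr_{C_0}(x) = \sum_{[\alpha]\neq [e]} x([\alpha]).
\]
In parallel, after base change to $D_{\infty}$ Strickland's isomorphism supplies a decomposition $E(B\Sigma_p)/I \otimes_E D_{\infty} \cong \prod_{H \in \Sub_p(\qz)} D_{\infty}$, and each map $H$ appearing in $\sum_H H$ is the projection onto the $H$-factor.

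The central task, which I expect to be the main obstacle, is matching these two product decompositions of $E(B\Sigma_p)/I \otimes_E D_{\infty}$: one indexed by non-trivial conjugacy classes of maps $\Z_p^n \to \Sigma_p$ (character-theoretic), the other indexed by order-$p$ subgroups of $\qz$ (via Strickland). Both index sets have cardinality $(p^n-1)/(p-1)$, and the natural bijection sends $H \subset \qz$ to the conjugacy class of the composite
\[
\Z_p^n \twoheadrightarrow \Hom(H,\Q_p/\Z_p) \cong \Z/p \hookrightarrow \Sigma_p,
\]
where the first map is Pontryagin dual to $H \hookrightarrow \qz$ and the second is any fixed embedding of a Sylow $p$-subgroup (all such are conjugate in $\Sigma_p$, with the normalizer acting trivially on homomorphisms into the Sylow). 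Establishing that the $H$-projection from Strickland's decomposition coincides, under the character map, with evaluation at this conjugacy class reduces to naturality of the HKR character map on $B\Sigma_p$. Once this identification is in place, the formulas for $\sigma_{can}$ and $\sum_H H$ agree term-by-term on $E(B\Sigma_p)/I$, completing the proof.
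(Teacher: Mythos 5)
Your argument is correct and is essentially the paper's own proof, which (more tersely) unwinds $\sigma_{can} = p!\,\Tr_{C_0}|_{E(B\Sigma_p)/I}$ as the sum of the maps $E(B\Sigma_p)/I \to C_0$ classifying the order-$p$ subgroups of $\qz$ under the canonical isomorphism $C_0 \otimes \Sub_p(\G_{E}) \cong \Sub_p(\qz)$ --- precisely your matching of the character-theoretic and Strickland decompositions. One small slip in a parenthetical: the normalizer of a Sylow $p$-subgroup $\Z/p \subset \Sigma_p$ acts on it through all of $\Aut(\Z/p)$ rather than trivially, and it is exactly this that makes the conjugacy class of your composite $\Z_p^n \twoheadrightarrow \Hom(H,\Q_p/\Z_p) \cong \Z/p \hookrightarrow \Sigma_p$ independent of the choices involved.
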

\begin{proof}
This follows in a straight-forward way from the definitions. Unwrapping the definition of the character map, the map $\sigma_{can}$ is the sum of a collection of maps 
\[
E(B\Sigma_p)/I \lra{} C_0,
\] 
one for each subgroup of order $p$ in $\qz$. These are the maps induced by the canonical isomorphism
\[
C_0 \otimes \Sub_p(\G_{E}) \cong \Sub_p(\qz).
\]
In other words, they classify the subgroups of order $p$ in $\qz$.
\end{proof}

Since $\sigma_{can} \in [\bar{\sigma}]$, the following diagram commutes
\[
\xymatrix{E(X) \ar[r]^-{P_p} & E(B\Sigma_p) \otimes_E E(X) \ar[r] \ar[d]_-{\text{Res}\otimes 1} & E(B\Sigma_p)/I \otimes_E E(X) \ar[d]_-{\bar{\sigma}\otimes 1} \ar[r]^-{\sigma_{can} \otimes 1} & E(X) \ar[dl] \\ & E(X) \ar[r] & E(X)/p &}
\]
and this implies that 
\[
(\sigma_{can}\otimes 1)(P_p/I)(x) \equiv x^p \mod p.
\]

\begin{cor}
For $x \in E(X)$, there is a congruence
\[
T_p(x) \equiv x^p \mod p.
\]
\end{cor}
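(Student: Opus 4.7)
The plan is to combine the preceding proposition with the defining property of the Frobenius class. By that proposition, $T_p = (\sigma_{can} \otimes 1)(P_p/I)$, so it suffices to show
\[
(\sigma_{can} \otimes 1)(P_p/I)(x) \equiv x^p \pmod p.
\]
The Hecke operator has now been eliminated from the question, and only the power operation and the canonical lift $\sigma_{can}$ remain.

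Next, since $\sigma_{can}$ lifts $\bar{\sigma}$, its reduction mod $p$ equals $\bar{\sigma}$, so it is equivalent to prove the same congruence with $\bar{\sigma}$ in place of $\sigma_{can}$. I would then observe that $\bar{\sigma}$ is compatible with restriction mod $p$: from the coordinate descriptions recalled in the excerpt, both maps $E(B\Sigma_p) \to E/p$ obtained by reducing $\mathrm{Res}$ modulo $p$ and by composing the quotient $q \colon E(B\Sigma_p) \to E(B\Sigma_p)/I$ with $\bar{\sigma}$ kill $y$ and fix the constants, hence agree. Combined with the standard property of the total $p$-th power operation in an $E_\infty$-ring that $(\mathrm{Res} \otimes 1)(P_p(x)) = x^p$, this yields
\[
(\sigma_{can} \otimes 1)(P_p/I)(x) \equiv (\mathrm{Res} \otimes 1)(P_p(x)) = x^p \pmod p,
\]
and hence $T_p(x) \equiv x^p \pmod p$.

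The argument is essentially formal once the previous proposition and the coordinate descriptions are in hand. The only conceptual input beyond those is the compatibility $\bar{\sigma} \circ q \equiv \mathrm{Res} \pmod p$, which is exactly Rezk's description of the Frobenius class and is therefore off-the-shelf; I do not anticipate any real obstacle beyond being careful to assemble the commutative diagram correctly.
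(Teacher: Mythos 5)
Your proposal is correct and follows essentially the same route as the paper: reduce to $(\sigma_{can}\otimes 1)(P_p/I)$ via the preceding proposition, use that $\sigma_{can}$ reduces to $\bar{\sigma}$ mod $p$, identify $\bar{\sigma}\circ q$ with $\mathrm{Res}$ mod $p$ (both kill $y$), and invoke the standard fact that restricting $P_p$ to the trivial group gives the $p$-th power map. This is exactly the commutative diagram the paper assembles.
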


Let $X$ be a space with the property that $E(X)$ is torsion-free.  The corollary above implies the existence of a canonical function
\[
\theta \colon E(X) \lra{} E(X)
\] 
such that 
\[
T_p(x) = x^p + p\theta(x).
\]

\begin{example} 
When $n=1$, $\G_E$ is a height $1$ formal group,
\[
E(B\Sigma_p)/I
\]
is a rank one $E$-module, and $\sigma_{can}$ is an $E$-algebra isomorphism. The composite
\[
\xymatrix{E(X) \ar[r]^-{P_p/I} & E(B\Sigma_p)/I \otimes_E E(X) \ar[r]^-{\sigma_{can} \otimes 1} & E(X)}
\]
is the $p$th unstable Adams operation. In this situation, the function $\theta$ is understood by work of Bousfield \cite{bousfieldlambda}.
\end{example}

\begin{example}
At arbitrary height, we may consider the effect of $T_p$ on $z \in \Z_p \subset E$. Since $T_p$ is a sum of ring maps 
\[
T_p(z) = |\Sub_p(\qz)|z.
\]
This is congruent to $z^p$ mod $p$.
\end{example}

\begin{example}
At height $2$ and the prime $2$, Rezk constructed an $E$-theory associated to a certain elliptic curve \cite{rezkpowercalc}. He calculated $P_2/I$, when $X=*$. He found that, after choosing a particular coordinate $x$, 
\[
E(B\Sigma_2)/I \cong \Z_2\powser{u_1}[x]/(x^3-u_1x-2)
\]
and
\[
P_2/I \colon \Z_2\powser{u_1} \lra{} \Z_2\powser{u_1}[x]/(x^3-u_1x-2)
\]
sends $u_1 \mapsto u_{1}^2+3x-u_1x^2$. In \cite[Section 4B]{Drinfeld}, Drinfeld explains how to compute the ring that corepresents $\Z/2\times \Z/2$-level structures. Note that in the ring
\[
\Z_2\powser{u_1}[y,z]/(y^3-u_1y-2),
\]
$y$ is a root of $z^3-u_1z-2$ and
\[
\frac{z^3-u_1z-2}{z-y} = z^2+yz+y^2-u_1.
\]
Drinfeld's construction gives
\[
D_1 = \Gamma \Level(\Z/2 \times \Z/2, \G_{E}) \cong \Z_2\powser{u_1}[y,z]/(y^3-u_1y-2,z^2+yz+y^2-u_1).
\]
The point of this construction is that $x^3-u_1x-2$ factors into linear terms over this ring. In fact,
\[
x^3-u_1x-2 = (x-y)(x-z)(x+y+z).
\]
The three maps $E(B\Sigma_2)/I \rightarrow D_1 \subset C_0$ that show up in the character map are given by sending $x$ to these roots. A calculation shows that 
\[
\sigma_{can}(x) = 0
\]
and that 
\[
T_p(u_1) = (\sigma_{can} \otimes 1)(P_2/I)(u_1) = u_{1}^2.
\]
\end{example}

\bibliographystyle{amsalpha}
\bibliography{mybib}

\end{document}